\def\today{\ifcase\month\or
  January\or February\or March\or April\or May\or June\or=
  July\or August\or September\or October\or November\or December\fi
  \space\number\day, \number\year}
 \newtheorem{theorem}{Theorem}
 \newtheorem{lemma}[theorem]{Lemma}
 \newtheorem{proposition}[theorem]{Proposition}
 \theoremstyle{definition}
 \theoremstyle{remark}
\newcommand{\intav}[1]{\mathchoice {\mathop{\vrule width 6pt height 3 pt depth  -2.5pt
\kern -8pt \intop}\nolimits_{\kern -6pt#1}} {\mathop{\vrule width
5pt height 3  pt depth -2.6pt \kern -6pt \intop}\nolimits_{#1}}
{\mathop{\vrule width 5pt height 3 pt depth -2.6pt \kern -6pt
\intop}\nolimits_{#1}} {\mathop{\vrule width 5pt height 3 pt depth
-2.6pt \kern -6pt \intop}\nolimits_{#1}}}
\begin{document}

\title[Continuity for the one-dimensional centered Hardy-Littlewood maximal operator at the derivative level]{Continuity for the one-dimensional centered Hardy-Littlewood maximal operator at the derivative level}
\author[Gonz\'{a}lez-Riquelme ]{Cristian Gonz\'{a}lez-Riquelme}
\subjclass[2010]{26A45, 42B25, 39A12, 46E35, 46E39, 05C12.}
\keywords{Maximal operators; continuity}
\address{IMPA - Instituto de Matem\'{a}tica Pura e Aplicada\\
Rio de Janeiro - RJ, Brazil, 22460-320.}
\email{cristian@impa.br}

\allowdisplaybreaks
\numberwithin{equation}{section}

\maketitle

\begin{abstract}
We prove the continuity of the map
$f \mapsto (Mf)'$
from $W^{1,1}(\mathbb{R})$ to $L^1(\mathbb{R})$, where $M$ is the centered Hardy-Littlewood maximal operator. This solves a question posed by Carneiro, Madrid and Pierce.
\end{abstract}
\section{Introduction}
Maximal operators are central objects in analysis.  The most classical of these operators is the centered Hardy-Littlewood maximal operator. This is defined as follows: for any $f\in L^{1}_{\text{loc}}(\mathbb{R}^d)$ and $x\in \mathbb{R}^d$ 
\begin{align*}
Mf(x):=\sup_{r>0}\frac{\int_{B(x,r)}|f|}{|B(x,r)|}=:\sup_{r>0}\,\intav{B(x,r)}|f|,\end{align*} where $|X|$ is the Lebesgue measure of the measurable set $X\subset \mathbb{R}^d$. We define $\widetilde{M}$ as its uncentered version, where the supremum is taken over all balls that contain $x$ but are not necessarily centered at $x$. 
The regularity theory for these operators started  with Kinnunen \cite{Kinnunen1997}, who proved that \begin{align}\label{hardylittlewoodmapluiro}
f\mapsto Mf\end{align} is bounded from $W^{1,p}(\mathbb{R}^d)$ to itself when $p>1$. The same result for $\widetilde{M}$ follows by similar methods. The case $p=1$ is much more delicate. Certainly, since $Mf\notin L^{1}(\mathbb{R}^d)$ for any non-trivial $f$, one cannot expect Kinnunen's result to hold for $p=1$. Nonetheless, the boundedness is true at the derivative level when $p=1$
and $n = 1$, i.e. the map $f\mapsto (Mf)'$ is bounded from $W^{1,1}(\mathbb{R})$ to $L^{1}(\mathbb{R}).$ This was established by Kurka in \cite{Kurka2010}, while the same result for $\widetilde{M}$ was obtained by Tanaka \cite{Tanaka2002} and sharpened by Aldaz and P\'{e}rez-L\'{a}zaro \cite{AP2007}. 
This boundedness has been also investigated in higher dimensions. Luiro \cite{Luiro2018} established the boundedness of the map $f\mapsto |\nabla \widetilde{M}f|$ from $W^{1,1}_{\text{rad}}(\mathbb{R}^d)$ to $L^{1}(\mathbb{R}^d)$, while Weigt \cite{weigtcharacteristic} proved the boundedness when restricting the map to characteristic functions.

The continuity for these types of maps has been an active topic of research over the last years. According to \cite[Question 3]{HO2004}, it was asked by T. Iwaniec whether the map \eqref{hardylittlewoodmapluiro} is continuous from $W^{1,p}(\mathbb{R}^d)$ to itself, when $p>1$. This question was answered affirmatively by Luiro in \cite{Luiro2007}. Again, the endpoint case $p=1$ is significantly more involved. For the uncentered Hardy-Littlewood maximal operator, the continuity of the map
\begin{align*}
f\mapsto \left(\widetilde{M}f\right)'
\end{align*}
from $W^{1,1}(\mathbb{R})$ to $L^{1}(\mathbb{R})$ was proved by Carneiro, Madrid and Pierce in \cite{CMP2017}. This was later generalized in \cite{GRK} and \cite{CGRM} to the $BV(\mathbb{R})$ case and to the higher dimensional radial case, respectively. In the fractional version of this problem, based on previous developments made in \cite{BM2019,Madrid2017,weigt2020sobolev}, the general case was obtained in \cite{beltran2021continuity}. However, the centered classical case is beyond the scope of the methods developed in these previous works. 

Another family of operators was also object of study in this topic: maximal operators of convolution type associated to smooth kernels. We write $M_{\phi}$ for the centered maximal operator associated to a radially non-increasing kernel $\phi\in L^{1}(\mathbb{R}^d)$. The boundedness of the map $f\mapsto (M_{\phi}f)'$ from $W^{1,1}(\mathbb{R})$ to $L^{1}(\mathbb{R})$ was proved in \cite{CS2013} and later in \cite{CFS2015} for certain smooth kernels $\phi$ related to partial differential equations that include the heat and Poisson kernels. A radial version of these results was achieved in \cite{CGR}. A step forward for the understanding of the continuity in the centered setting was made recently by the author in the work \cite{gonzalezriquelme2021continuity}, where the continuity of some of these maps was established.

In the present manuscript, we establish the continuity for the centered Hardy-Littlewood maximal operator, solving a question posed by Carneiro, Madrid and Pierce in \cite[Question A]{CMP2017} and establishing, in the one-dimensional case, the endpoint version of \cite[Question 3]{HO2004} at the derivative level.
\begin{theorem}\label{theoremcentered}
We have that the map $$f\mapsto (Mf)'$$
is continuous from $W^{1,1}(\mathbb{R})$ to $L^{1}(\mathbb{R}).$
\end{theorem}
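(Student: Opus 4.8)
The plan is to argue by a subsequence-and-compactness reduction combined with a structural analysis of the centered maximal function via its optimal radii. First I would record two elementary facts: that $W^{1,1}(\mathbb{R})$ embeds continuously into $C_0(\mathbb{R})$, and that $|Mf(x)-Mg(x)|\le\|f-g\|_\infty$, so that $f_j\to f$ in $W^{1,1}$ forces $Mf_j\to Mf$ uniformly. Since Kurka's theorem \cite{Kurka2010} gives the uniform bound $\|(Mf_j)'\|_{L^1}\le C\|f_j'\|_{L^1}$, it suffices, by the standard device that ``every subsequence admits a further subsequence converging to the common limit'' implies convergence of the whole sequence, to show that after passing to a subsequence along which $f_j\to f$ and $f_j'\to f'$ almost everywhere and in $L^1$, one has $(Mf_j)'\to(Mf)'$ in $L^1(\mathbb{R})$. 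I would then invoke the Riesz--Scheff\'e criterion: if $g_j\to g$ a.e. and $\|g_j\|_{L^1}\to\|g\|_{L^1}$, then $g_j\to g$ in $L^1$ (valid for signed functions, via Fatou applied to $|g_j|+|g|-|g_j-g|\ge0$). The whole problem thus reduces to two assertions: the pointwise a.e. convergence $(Mf_j)'\to(Mf)'$, and the convergence of norms $\|(Mf_j)'\|_{L^1}\to\|(Mf)'\|_{L^1}$.

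The engine for both assertions is the optimal-radius description of $Mf$. For $f\in W^{1,1}(\mathbb{R})$ the supremum defining $Mf(x)$ is, at each point of the open detachment set $D_f:=\{Mf>|f|\}$, attained at some finite $r(x)>0$: not at $r=0$ since $x\in D_f$, and not as $r\to\infty$ since $f\in C_0(\mathbb{R})$ forces large averages to vanish. Differentiating the average in $r$ and in $x$ and applying the envelope theorem yields, for a.e. $x\in D_f$, the first-order relation $|f(x+r(x))|+|f(x-r(x))|=2\,Mf(x)$ together with
\[
(Mf)'(x)=\frac{|f(x+r(x))|-|f(x-r(x))|}{2\,r(x)}.
\]
On the contact set $C_f:=\{Mf=|f|\}$ one has $(Mf)'=(|f|)'$ a.e., whence $\int_{C_f}|(Mf)'|=\int_{C_f}|f'|$. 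I would decompose $\int_{\mathbb{R}}|(Mf)'|$ into this contact contribution and the detachment contribution, writing $D_f$ as a countable disjoint union of open intervals and distinguishing the components on which $r(x)$ stays comparable to the component length (which behave qualitatively like the uncentered operator analyzed by Aldaz--P\'erez-L\'azaro \cite{AP2007}) from the genuinely centered components on which $r(x)$ is large. This long-range averaging is exactly the feature absent from the uncentered and convolution settings, and is the source of the difficulty.

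For the pointwise convergence I would prove a stability statement for the optimal radii: uniform convergence $Mf_j\to Mf$ together with a.e. convergence $|f_j|\to|f|$ force, for a.e. $x\in D_f$, that every subsequential limit of $r_j(x)$ is an optimal radius for $f$ at $x$; where the optimizer for $f$ is unique this pins down $r_j(x)\to r(x)$, and then the derivative formula combined with $f_j'\to f'$ a.e. gives $(Mf_j)'(x)\to(Mf)'(x)$. The set of $x$ with non-unique optimizer, together with the common boundary $\partial D_f$, must be shown negligible or its contribution shown to vanish; this is a delicate measure-theoretic point that I expect to require analyzing the level sets of $r(\cdot)$ and the regularity of $Mf$ near $\partial D_f$. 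Granting a.e. convergence, Fatou at once yields $\|(Mf)'\|_{L^1}\le\liminf_j\|(Mf_j)'\|_{L^1}$, so only the matching upper bound remains.

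The crux, and the step I expect to be by far the hardest, is the upper semicontinuity $\limsup_j\|(Mf_j)'\|_{L^1}\le\|(Mf)'\|_{L^1}$, since total variation is only lower semicontinuous in general and can jump upward under limits. The danger is twofold: variation could concentrate on shrinking sets, or escape via long-range detachment components whose optimal radius diverges. To exclude both I would refine Kurka's estimate into a \emph{localized and stable} form: the variation carried by components with radius exceeding $R$, or lying outside $[-N,N]$, should be small uniformly in $j$ once $R,N$ are large, using the equi-integrability of $\{f_j'\}$ (from $L^1$ convergence) and the uniform decay of $f_j$ at infinity. The matching of contact contributions $\int_{C_{f_j}}|f_j'|\to\int_{C_f}|f'|$ is itself nontrivial since the contact sets vary with $j$; I would control $C_{f_j}\triangle C_f$ through the uniform convergence $Mf_j\to Mf$ and again equi-integrability. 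Combining such a uniform tightness/equi-integrability estimate for the measures $|(Mf_j)'|\,\dd x$ with the pointwise convergence upgrades $\liminf$ to $\lim$ for the norms, completing the argument. I anticipate that the centered long-range components will demand a genuinely new quantitative lemma, namely a localization of Kurka's variation bound together with its continuity in the data, which I take to be the technical heart of the paper.
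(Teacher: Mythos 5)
Your plan correctly isolates where the difficulty lives, but it does not contain a proof: the two steps that carry all the content are both left as statements of intent. The decisive one is the upper semicontinuity $\limsup_j\|(Mf_j)'\|_{L^1}\le\|(Mf)'\|_{L^1}$, which you yourself describe as requiring ``a genuinely new quantitative lemma'' (a localized, data-continuous version of Kurka's variation bound). No mechanism is offered for producing it, and this step is essentially equivalent in difficulty to the theorem itself; the Riesz--Scheff\'e reduction merely relocates the problem rather than solving it. A concrete sub-gap in the same step: you propose to match the contact-set contributions by controlling $C_{f_j}\triangle C_f$ via uniform convergence $Mf_j\to Mf$. Uniform convergence gives $|C_{f_j}\cap D_f\cap[-N,N]|\to 0$, but nothing prevents $C_{f_j}$ from omitting a large-measure portion of $C_f$ (an arbitrarily small perturbation can break the equality $Mf=|f|$ on most of $C_f$), so $\int_{C_{f_j}}|f_j'|$ need not converge to $\int_{C_f}|f'|$ by this route. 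The pointwise step is also incomplete as stated --- you flag but do not resolve the non-uniqueness of optimal radii and the behaviour near $\partial D_f$ --- although this particular issue is repairable: when $Mf$ is differentiable at $x$, Luiro's formula gives the \emph{same} value of $(Mf)'(x)$ for every optimal radius, so one argues along subsequences of $r_j(x)$ without needing uniqueness (this is exactly how Lemma \ref{pointwiseconvergencederivativelevel} proceeds).

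It is worth noting that the paper avoids the norm-convergence route entirely, and with it the need for any upper-semicontinuity statement for the total variation. Instead, one fixes a simple function $g_\varepsilon=\sum_i\alpha_i\chi_{(a_i,a_{i+1})}$ approximating $f'$ and splits $M=\max\{M_1,M_2\}$ according to whether the averaging interval is contained in one of the intervals $(a_i,a_{i+1})$. The local piece $M_1$ commutes with subtraction of an affine function $L_i$ of slope $\alpha_i$, so Kurka's bound applied to $f_j-L_i$ shows $(M_1f_j)'$ is within $O(\varepsilon)$ of $f_j'$ in $L^1$; the global piece $M_2$ has radii bounded below by $\delta$ on $U_{\delta,K}$, which yields a uniform Lipschitz bound and an explicit derivative formula to which dominated convergence applies. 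Your ``long-range detachment components'' are thus tamed not by a refined variation estimate but by the lower bound on the radius, and the problematic short-range behaviour is absorbed into the $M_1$ piece where closeness to $f_j'$ itself (not just boundedness of variation) is what is proved. If you want to salvage your outline, the missing lemma you would have to supply is precisely this kind of structural decomposition; as written, the proposal identifies the obstruction without overcoming it.
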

We notice that the map considered here is well defined and bounded  (see Lemma \ref{kurkaslemma}). 
We highlight that the methods developed in the aforementioned works \cite{CGRM,CMP2017,gonzalezriquelme2021continuity,GRK} are not enough to conclude our result. For instance, in the works \cite{CGRM, CMP2017,GRK} it is important that the operator $\widetilde{M}$ has the flatness property; this is, that the maximal functions have a.e. zero derivative at the points where they coincide with the original function.  In \cite{gonzalezriquelme2021continuity}, the subharmonicity property, which the maximal functions considered there satisfy, plays a crucial role in the proof of the continuity. The centered Hardy-Littlewood maximal operator does not satisfy either of these properties, therefore, new insights are required in order to achieve our result. Our method is based on a decomposition of $M$ as a maximum of two operators $M_1$ and $M_2,$ both of them depending on $f$ and on a simple function $g_{\varepsilon}$ that approximates $f'$ in $L^{1}(\mathbb{R})$. The operator $M_1,$ the local one, is restricted to balls that are contained in the support of an interval determined by $g_{\varepsilon}$. On the other hand, the operator $M_2$, the global one, is restricted to balls that are not contained in any of these lines. The idea is that, since the operator $M_1$ is well behaved with respect to some lines, it is possible to conclude that $M_1f_j$ is close to $f_j$ at the derivative level, for any $j$ big enough. A different approach is needed in order to deal with the contribution of the operator $M_2$, for this we shall take advantage of the fact that the radii considered in $M_2$ are generally bounded by below. In essence, this yields a smoother nature to this operator that is helpful for our purposes.

Considering the progress made in this manuscript, we summarize the situation of the {\it endpoint continuity program} (originally proposed in \cite[Table 1]{CMP2017}) in the table below. The word YES in a box means that the continuity of the corresponding map has been proved, whereas the word NO means that it has been shown that it fails. We notice that after this work the only open problem in this program is to determine if the map $f\mapsto Mf$ is continuous from $BV(\mathbb{R})$ to itself, marked with OPEN in the table below.
\begin{table}[h]
\renewcommand{\arraystretch}{1.3}
\centering
\caption{Endpoint continuity program}
\label{Table-ECP}
\begin{tabular}{|c|c|c|c|c|}
\hline
 \raisebox{-1.3\height}{------------}&  \parbox[t]{2.8cm}{  $W^{1,1}-$continuity; \\ continuous setting} &  \parbox[t]{2.8cm}{  $BV-$continuity; \\ continuous setting} & \parbox[t]{2.6cm}{  $W^{1,1}-$continuity; \\ discrete setting} &  \parbox[t]{2.5cm}{  $BV-$continuity; \\ discrete setting} \\ [0.5cm]
\hline
 \parbox[t]{3.3cm}{ Centered classical \\ maximal operator} &  \raisebox{-0.8\height}{YES: Theorem \ref{theoremcentered}} & \raisebox{-0.8\height}{OPEN} & \raisebox{-0.8\height}{YES$^2$}  &\raisebox{-0.8\height}{YES$^4$}\\[0.5cm]
 \hline
  \parbox[t]{3.3cm}{ Uncentered classical \\ maximal operator} &  \raisebox{-0.8\height}{YES$^1$} &  \raisebox{-0.8\height}{YES$^6$} &  \raisebox{-0.8\height}{YES$^2$} &  \raisebox{-0.8\height}{YES$^1$} \\[0.5cm]
 \hline
 \parbox[t]{3.3cm}{ Centered fractional \\ maximal operator} &  \raisebox{-0.8\height}{YES$^5$} &  \raisebox{-0.8\height}{NO$^1$} &  \raisebox{-0.8\height}{YES$^3$} & \raisebox{-0.8\height}{NO$^1$}  \\[0.5cm]
 \hline
\parbox[t]{3.3cm}{ Uncentered fractional \\ maximal operator} &  \raisebox{-0.8\height}{YES$^4$} &  \raisebox{-0.8\height}{NO$^1$} &  \raisebox{-0.8\height}{YES$^3$}  & \raisebox{-0.8\height}{NO$^1$} \\[0.5cm]
 \hline
\end{tabular}
\vspace{0.05cm}
\flushleft{
\ \ \footnotesize{$^1$ Result previously obtained in \cite{CMP2017}.\\
\ \ $^2$ Result previously obtained in \cite[Theorem 1]{CH2012}. \\
\ \ $^3$ Result previously obtained in \cite[Theorem 3]{CM2015}.\\
\ \ $^4$ Result previously obtained in \cite{Madrid2017}.\\
\ \ $^5$ Result previously obtained in \cite{BM2019.2}.\\
\ \ $^6$ Result previously obtained in \cite{GRK}
}}
\end{table}
\section{Preliminaries}
 In this section we discuss some preliminary results for our purposes. Let us consider $f_j\to f$ in $W^{1,1}(\mathbb{R})$. In order to prove Theorem \ref{theoremcentered}, by \cite[Lemma 14]{CMP2017} we may assume henceforth that $f_j,f\ge 0$. Also, since the case $f=0$ of Theorem \ref{theoremcentered} follows by the boundedness, we assume that $f\neq 0$.
We start with the well known Luiro's formula.
\begin{proposition}[Case $p=1$ of {\cite[Theorem 3.1]{Luiro2007}}]\label{Luirosformulacentered} Let us take $g\in W^{1,1}(\mathbb{R})$. Assume that $Mg$ is differentiable at the point $x$, if $Mg(x)=\intav{[x-r,x+r]}|g|$ with $r>0$, we have that
$$(Mg)'(x)=\intav{[x-r,x+r]}|g|'.$$
\end{proposition}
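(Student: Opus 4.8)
The plan is to convert the problem into a first-order optimality condition by exploiting the fact that the radius $r$ is a maximizing radius at $x$. First I would fix this $r>0$ (supplied by the hypothesis $Mg(x)=\intav{[x-r,x+r]}|g|$) and introduce the single-scale averaging function
$$A(y):=\intav{[y-r,y+r]}|g|=\frac{1}{2r}\int_{y-r}^{y+r}|g(t)|\,\dt.$$
Since $g\in W^{1,1}(\mathbb{R})$ we have $|g|\in W^{1,1}(\mathbb{R})$, with $|g|'=\sgn(g)\,g'$ almost everywhere, so the good representative of $|g|$ is absolutely continuous and in particular continuous. The fundamental theorem of calculus then gives that $A$ is differentiable at every point, with
$$A'(y)=\frac{1}{2r}\bigl(|g|(y+r)-|g|(y-r)\bigr).$$
Using absolute continuity once more to rewrite the right-hand side as an integral of $|g|'$, I would record that
$$A'(x)=\frac{1}{2r}\int_{x-r}^{x+r}|g|'(t)\,\dt=\intav{[x-r,x+r]}|g|',$$
which is exactly the quantity appearing on the right of the claimed identity. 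So it suffices to prove $(Mg)'(x)=A'(x)$.

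The heart of the argument is the comparison $Mg(y)=\sup_{s>0}\intav{[y-s,y+s]}|g|\ge A(y)$, valid for every $y$ because the supremum defining $Mg$ includes the single scale $s=r$, together with the equality $Mg(x)=A(x)$ guaranteed by the hypothesis. Hence the function $h:=Mg-A$ satisfies $h\ge 0$ everywhere and $h(x)=0$, so $x$ is a global minimum of $h$. Since $Mg$ is differentiable at $x$ by assumption and $A$ is differentiable at $x$ by the step above, $h$ is differentiable at $x$; the first-order condition at an interior minimum forces $h'(x)=0$, that is $(Mg)'(x)=A'(x)$. Combining this with the computation of $A'(x)$ yields the proposition.

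Everything here is elementary once the optimality of $r$ is in place, and I expect the only point requiring genuine care to be the differentiability of the fixed-scale average $A$ together with the passage $A'(x)=\intav{[x-r,x+r]}|g|'$, which both rest on the absolute continuity of $|g|$ and the fundamental theorem of calculus; the remainder is the clean minimization argument. I would also remark that it is the one-dimensional setting that makes fixing a single scalar radius and differentiating in $y$ alone legitimate: the higher-dimensional version of Luiro's formula requires a more delicate perturbation of the maximizing ball, but no such refinement is needed for the statement at hand.
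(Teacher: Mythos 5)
Your argument is correct. The paper itself does not prove the proposition from scratch: it simply cites \cite[Proposition 2.4]{beltran2021continuity} (and the remark following it), which in turn descends from Luiro's general machinery for maximal operators on $W^{1,p}$. Your proof is a self-contained, elementary substitute: you freeze the maximizing radius $r$, observe that the single-scale average $A(y)=\intav{[y-r,y+r]}|g|$ is differentiable everywhere with $A'(x)=\intav{[x-r,x+r]}|g|'$ because the good representative of $|g|\in W^{1,1}(\mathbb{R})$ is absolutely continuous, and then apply the first-derivative test to $h=Mg-A\ge 0$, which vanishes at $x$ and is differentiable there by hypothesis. All the steps check out: $|g|\in W^{1,1}$ with $|g|'=\sgn(g)\,g'$ is standard, the hypothesis guarantees $r>0$ is admissible in the supremum defining $Mg$, and $x$ is an interior minimum of $h$, so $h'(x)=0$. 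What the citation buys the paper is generality (Luiro's formula for all maximizing radii, in higher dimensions, and for fractional variants, where one must perturb the maximizing ball rather than just translate it); what your argument buys is transparency and independence from the literature, at the cost of being genuinely one-dimensional, exactly as you note in your closing remark. Either route is acceptable here, since the paper only ever invokes the formula in the one-dimensional, fixed-radius form you prove.
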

\begin{proof}
This follows from \cite[Proposition 2.4]{beltran2021continuity} and the remark thereafter.
\end{proof}
The next result provide us with a local control for the variation of $M$. For any interval $I$ (not necessarily finite)  and $g\in L^{1}(\mathbb{R})$ we define $$M_{I}g(x):=\sup_{[x-r,x+r]\subset I}\intav{[x-r,x+r]}|g|.$$ 
\begin{lemma}\label{kurkaslemma}
If $f\in W^{1,1}(I)$, we have that $M_{I}f$ is absolutely continuous and that there exists an universal constant $C$, such that $$\int_{I}|(M_{I}f)'|\le C\int_{I}|f'|.$$
\end{lemma}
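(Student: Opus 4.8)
The statement to prove is Lemma \ref{kurkaslemma}: for $f \in W^{1,1}(I)$, the local maximal function $M_I f$ is absolutely continuous and satisfies $\int_I |(M_I f)'| \le C \int_I |f'|$ with a universal constant $C$. This is essentially Kurka's theorem localized to an interval $I$, so my plan is to reduce it to the known global result rather than reprove the hard combinatorial estimate from scratch.

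Let me sketch my approach. First I would observe that $M_I f(x) = \sup_{[x-r,x+r]\subset I} \intav{[x-r,x+r]}|f|$ only averages $|f|$ over symmetric intervals around $x$ that stay inside $I$; since averaging $|f|$ rather than $f$ can only help (as $\left|\intav{[x-r,x+r]}|f|\,\right|' $-type bounds go through and $|f|\in W^{1,1}$ with $\||f|'\|_1 \le \|f'\|_1$), I may assume $f\ge 0$ and work with $M_I f$ directly. The core idea is to realize $M_I f$ as the restriction to $I$ of a genuine Hardy--Littlewood maximal function of a suitably extended function, so that Kurka's global bound $\int_{\mathbb{R}} |(Mg)'| \le C\int_\mathbb{R}|g'|$ applies. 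Concretely, given a finite interval $I=[a,b]$, I would extend $f$ to a function $F\in W^{1,1}(\mathbb{R})$ by reflecting or by extending $F$ to equal suitable large constants outside $I$ in such a way that (i) $\|F'\|_{L^1(\mathbb{R})} \le C'\|f'\|_{L^1(I)}$ and (ii) for $x\in I$ the centered maximal function $MF(x)$ is attained only by balls $[x-r,x+r]\subset I$, forcing $MF|_I = M_I f$. The natural choice is to set $F$ equal to a very large value outside $I$ (or, more carefully, to push the competing averages down), guaranteeing that no ball poking outside $I$ can win the supremum for an interior point; one must check this does not inflate the total variation beyond a constant multiple of $\int_I|f'|$.

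The main obstacle I expect is exactly this extension step: making the competitor balls that exit $I$ harmless \emph{while} keeping $\|F'\|_{L^1}$ comparable to $\|f'\|_{L^1(I)}$. Simply taking $F\equiv+\infty$ outside is not admissible, and a constant extension may let a large off-center average (over a ball extending into the constant region) dominate the interior values; conversely choosing the constant too large corrupts the total variation. A cleaner route, which I would pursue in parallel, is to avoid the extension entirely and instead repeat Kurka's argument verbatim but with all balls constrained to lie in $I$. Kurka's proof controls $(Mf)'$ through a counting/variation estimate on the set where the maximal radius function has jumps and on the competing "good" and "bad" intervals; every geometric lemma there is local and monotone under shrinking the admissible family of balls, so restricting to $[x-r,x+r]\subset I$ only removes competitors and the same constant $C$ survives. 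I would therefore present the proof as: invoke Kurka's machinery with the admissible family $\{[x-r,x+r]\subset I\}$, note absolute continuity of $M_I f$ follows from the local boundedness of the variation together with continuity of $M_I f$ (which holds since $f\in W^{1,1}\subset C(I)$ after the standard representative choice), and conclude the stated bound with $C$ independent of $I$ and $f$. The absolute continuity is then immediate: $M_I f$ is continuous and its distributional derivative is an $L^1$ function with $\int_I|(M_I f)'|\le C\int_I|f'|<\infty$, so $M_I f\in W^{1,1}(I)$, hence absolutely continuous on every compact subinterval.
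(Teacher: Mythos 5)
Your second route --- rerunning Kurka's variation argument with the admissible family of intervals restricted to $[x-r,x+r]\subset I$ --- is exactly what the paper does: its proof of the bound is a citation of Kurka's Remark~6.4, which records precisely that the machinery localizes to $M_I$ with the same universal constant. So for the inequality $\int_I|(M_If)'|\le C\int_I|f'|$ your proposal and the paper coincide, and you are right to discard the extension approach, which you yourself identify as unworkable: one cannot simultaneously suppress the exterior competitors and keep $\|F'\|_{L^1(\mathbb{R})}$ comparable to $\int_I|f'|$.

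The genuine gap is in the absolute continuity step. You assert that absolute continuity ``follows from the local boundedness of the variation together with continuity of $M_If$,'' and then that ``its distributional derivative is an $L^1$ function.'' Neither implication holds as stated: a continuous function of bounded variation need not be absolutely continuous (the Cantor function is the standard counterexample), and the variation bound only tells you that the distributional derivative of $M_If$ is a finite signed measure. Showing that this measure has no singular part is exactly the content of absolute continuity, so declaring the derivative to be an $L^1$ function at this stage is circular. The missing ingredient is a separate argument ruling out a singular part --- for instance verifying Luzin's condition (N) for $M_If$ and invoking the Banach--Zarecki theorem, or reproducing the analysis of Kurka's Corollary~1.3, which is the reference the paper gives for precisely this point. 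Without it, the conclusion that $(M_If)'$ exists in $L^1(I)$ and accounts for the full variation is not established.
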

\begin{proof}
The absolutely continuity of $M_{I}f$ can be concluded by following the reasoning in \cite[Corollary 1.3]{Kurka2010}. The boundedness follows from \cite[Remark 6.4]{Kurka2010}.
\end{proof}
Also, we need the following uniform control near a finite number of points. 
\begin{lemma}\label{controloverfinitenumerofpoints}
Let $f_j\to f$ in $W^{1,1}(\mathbb{R})$. Let $\{p_1,\dots,p_s\}$ be a finite set. For any $\varepsilon>0$ there exists $\delta>0$ such that, for $j$ big enough, we have
$$\sum_{i=1}^{s}\int_{[p_i-\delta,p_i+\delta]}|(Mf_j)'|<\varepsilon$$.
\end{lemma}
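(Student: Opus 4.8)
The plan is to localize the Kurka-type bound of Lemma~\ref{kurkaslemma} by splitting $Mf_j$ near each $p_i$ into a genuinely local piece and two global pieces separated by an intermediate scale. Since the preliminaries let us assume $f_j,f\ge 0$, we have $|f_j|=f_j$; moreover $f_j\to f$ in $W^{1,1}(\mathbb{R})$ gives $f_j\to f$ uniformly (so $\|f_j\|_\infty\le\tfrac12\|f_j'\|_1$ stays bounded) and $f_j'\to f'$ in $L^1(\mathbb{R})$. Fix $\varepsilon>0$. I would first choose an intermediate radius $\Delta>0$ and then a much smaller $\delta>0$ with $\delta\ll\Delta$, and set $I_i:=[p_i-2\delta,p_i+2\delta]$ and $J_i:=[p_i-\delta,p_i+\delta]$. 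For $x\in J_i$ I split the balls $[x-r,x+r]$ according to whether (i) $[x-r,x+r]\subset I_i$, (ii) $[x-r,x+r]\not\subset I_i$ and $r\le\Delta$ (which forces $r\ge\delta$), or (iii) $r>\Delta$; since these three families exhaust all balls,
\[
Mf_j(x)=\max\bigl(M_{I_i}f_j(x),\,G^{\mathrm s}_{i,j}(x),\,G^{\mathrm l}_{i,j}(x)\bigr)\qquad(x\in J_i),
\]
where $G^{\mathrm s}_{i,j}$ is the supremum over the balls in family (ii) and $G^{\mathrm l}_{i,j}$ the supremum over family (iii). All three functions are absolutely continuous, so $|(Mf_j)'|\le|(M_{I_i}f_j)'|+|(G^{\mathrm s}_{i,j})'|+|(G^{\mathrm l}_{i,j})'|$ a.e.\ on $J_i$, and it suffices to bound the three integrals.

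The local term is immediate from Lemma~\ref{kurkaslemma}: $\int_{J_i}|(M_{I_i}f_j)'|\le C\int_{I_i}|f_j'|$. The large-radius term is controlled by the lower bound on the radius, since each averaging $x\mapsto\intav{[x-r,x+r]}f_j$ with $r>\Delta$ is Lipschitz with constant at most $\|f_j\|_\infty/(2\Delta)$, hence so is their supremum; thus $|(G^{\mathrm l}_{i,j})'|\le\|f_j'\|_1/(4\Delta)$ a.e., and therefore $\int_{J_i}|(G^{\mathrm l}_{i,j})'|\le \delta\,\|f_j'\|_1/(2\Delta)$, which is small precisely because $\delta\ll\Delta$.

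The crux is the small-radius global term, where the radius can be as small as $\delta$ and a naive Lipschitz estimate only yields the useless factor $\delta^{-1}$. Here I would invoke Luiro's formula (the argument of Proposition~\ref{Luirosformulacentered}, applied to the restricted supremum $G^{\mathrm s}_{i,j}$, whose optimizing radius exists by compactness of $[\delta,\Delta]$): for a.e.\ $x$ there is $r(x)\in[\delta,\Delta]$ with $(G^{\mathrm s}_{i,j})'(x)=\intav{[x-r(x),x+r(x)]}f_j'$, so that $|(G^{\mathrm s}_{i,j})'(x)|\le\frac1{2\delta}\int_{x-\Delta}^{x+\Delta}|f_j'|$. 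Integrating over $J_i$ and applying Fubini, the dangerous $\delta^{-1}$ is absorbed against $|J_i|=2\delta$, yielding
\[
\int_{J_i}|(G^{\mathrm s}_{i,j})'|\le\int_{[p_i-2\Delta,\,p_i+2\Delta]}|f_j'|.
\]
This is the key gain: the small-radius contribution is governed by the mass of $f_j'$ on a neighborhood of size $\Delta$, not $\delta$.

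Finally I would fix the parameters. Summing over $i$ and using $I_i\subset[p_i-2\Delta,p_i+2\Delta]$, the three bounds give $\sum_i\int_{J_i}|(Mf_j)'|\le (C+1)\sum_i\int_{[p_i-2\Delta,p_i+2\Delta]}|f_j'|+\tfrac{s\,\delta}{2\Delta}\|f_j'\|_1$. Since $f'\in L^1$, absolute continuity of the integral lets me choose $\Delta$ so small that $(C+1)\sum_i\int_{[p_i-2\Delta,p_i+2\Delta]}|f'|<\varepsilon/3$; then I choose $\delta\ll\Delta$ so that the last term is $<\varepsilon/3$ for all large $j$ (using $\sup_j\|f_j'\|_1<\infty$); and finally, since $f_j'\to f'$ in $L^1$, for $j$ large the $f_j'$-integrals exceed the $f'$-integrals by at most $\varepsilon/3$. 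The main obstacle is exactly the small-radius global piece: one must see that, although it involves radii down to $\delta$, the envelope/Luiro identity together with Fubini trades the factor $\delta^{-1}$ against $|J_i|$, leaving a quantity governed by the uniformly small mass of $f_j'$ near the points $p_i$.
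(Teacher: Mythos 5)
Your argument is correct and rests on the same two pillars as the paper's proof: Kurka's bound (Lemma \ref{kurkaslemma}) for the balls that stay near $p_i$, and Luiro's formula combined with a lower bound on the optimizing radius for the balls that do not. The difference is organizational. The paper uses only two pieces: it first fixes the intermediate scale $\delta_i$ (your $\Delta$) by absolute continuity of $\int|f'|$, takes the \emph{local} operator to be $M_{(p_i-\delta_i,p_i+\delta_i)}$ at that larger scale, and integrates only over the much smaller interval of radius $\delta_i/\ell$ (your $\delta$). Then every ``global'' ball has radius $\gtrsim\delta_i$, so the crude estimate $|(Mf_j)'(x)|\le\|f_j'\|_1/(2r)$ from Luiro's formula, integrated over a set of measure $2\delta_i/\ell$, is already $O(1/\ell)$ --- no intermediate family is needed. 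Because you instead place the local interval at the small scale $2\delta$, your global radii are only bounded below by $\delta$, which forces the extra family (ii); you rescue it with the Luiro-plus-Fubini trade of $\delta^{-1}$ against $|J_i|$, landing on the quantity $\int_{[p_i-2\Delta,p_i+2\Delta]}|f_j'|$ that the paper obtains directly from Kurka applied at scale $\delta_i$. Your route is therefore a little longer but the Fubini step is a clean, self-contained alternative; taking $I_i=[p_i-\Delta,p_i+\Delta]$ would collapse your three pieces into the paper's two. One point to tighten: as you define $G^{\mathrm s}_{i,j}$, the admissible radii depend on $x$ through the condition $[x-r,x+r]\not\subset I_i$, so the two-sided comparison underlying Luiro's formula can fail at a boundary radius for which the translated ball becomes contained in $I_i$. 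This is harmless: define instead $G^{\mathrm s}_{i,j}(x)=\sup_{\delta\le r\le\Delta}\intav{[x-r,x+r]}f_j$ with no containment condition. The identity $Mf_j=\max\bigl(M_{I_i}f_j,G^{\mathrm s}_{i,j},G^{\mathrm l}_{i,j}\bigr)$ on $J_i$ still holds, the set of radii is now compact and independent of $x$, and your application of the argument of Proposition \ref{Luirosformulacentered} goes through verbatim.
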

\begin{proof}
This proof follows a similar path than the one presented originally in \cite[Proposition 19]{GR}.
It is enough to prove that there exists $\delta>0$ such that 
\begin{align*}
\int_{[p_i-\delta,p_i+\delta]}|(Mf_j)'|<\frac{\varepsilon}{s}
\end{align*}
for any fixed $i$ and $j$ big enough.  Let us take $\delta_i>0$ such that $$\int_{(a_i-\delta_i,a_i+\delta_i)}|f'|<\frac{\varepsilon}{2Cs},$$ where $C$ is the universal constant that appears in Lemma \ref{kurkaslemma}. For $j$ big enough we have $$\int_{(a_i-\delta_i,a_i+\delta_i)}|f_j'|<\frac{\varepsilon}{2Cs}.$$ For any given $\ell\in \mathbb{Z}_{>0}$ let us define $$A^1_{\ell,j}:=\left\{x\in \left(a_i-\frac{\delta_i}{\ell},a_i+\frac{\delta_i}{\ell}\right); Mf_j(x)=M_{(a_i-\delta_i,a_i+\delta_i)}f_j(x)\right\}$$ and $$A^2_{\ell,j}=\left\{x\in \left(a_i-\frac{\delta_i}{\ell},a_i+\frac{\delta_i}{\ell}\right); Mf_j(x)>M_{(a_i-\delta_i,a_i+\delta_i)}f_j(x)\right\}.$$ Since $Mf_j\ge M_{(a_i-\delta_i,a_i+\delta_i)}f_j$ know that $(Mf_j)'=(M_{(a_i-\delta_i,a_i+\delta_i)}f_j)'$ a.e. in  $A^1_{\ell,j}.$ Therefore 
\begin{align*}
\int_{A^{1}_{\ell,j}}|(Mf_j)'|\le \int_{(a_i-\delta_i,a_i+\delta_i)}|(M_{(a_i-\delta_i,a_i+\delta_i)}f_j)'|\le C\int_{(a_i-\delta_i,a_i+\delta_i)}|f_j'|\le \frac{\varepsilon}{2s}.
\end{align*}
Also, for a.e. $x\in A^2_{\ell,j}$, we have that there exists $r_x\ge \delta_i-\frac{\delta_i}{\ell}=\frac{\delta_i(\ell-1)}{\ell}$ such that $\intav{[x-r_{j,x},x+r_{j,x}]}f_j=Mf_j(x)$. Then, by Luiro's formula (Proposition \ref{Luirosformulacentered}), we have that $(Mf_j)'(x)=\intav{[x-r_{j,x},x+r_{j,x}]}f_j',$ and therefore $|(Mf_j)'(x)|\le \frac{1}{2r_{j,x}}\|f_j'\|_1.$ Thus, for $x\in A^2_{\ell,j}$ we have $$|(Mf_j)'(x)|\le \frac{\delta_i \ell}{2(\ell-1)}\|f_j'\|_1.$$ In consequence, we have
\begin{align*}\int_{A^{2}_{\ell,j}}|(Mf_j)'|\le \int_{(a_i-\frac{\delta_i}{\ell},a_i+\frac{\delta_i}{\ell})}\frac{\delta_i \ell}{2(\ell-1)}\|f_j'\|_1\le \frac{\delta_i^2}{(\ell-1)}\|f_j'\|_1. 
\end{align*}
From here, we conclude our lemma by choosing  $\ell$ such that $\frac{\delta_i^2}{\ell-1}< \frac{\varepsilon}{4s}$, $\delta:=\frac{\delta_i}{\ell}$ and by taking $j$ big enough such that  $\frac{\|f'\|_1}{2}\le \|f_j'\|_1\le  \frac{3\|f'\|_1}{2}$.
\end{proof} 
Also, we need the following uniform control near infinity.
\begin{lemma}[{\cite[Proposition 4.11]{BM2019}}]\label{controlnearinfinitycc} 
Let $f_j\to f$ in $W^{1,1}(\mathbb{R})$ and $\varepsilon>0$ be given. There exists $K>0$ such that, for $j$ big enough, we have   $$\int_{(-K,K)^{c}}|(Mf_j)'|< \varepsilon.$$
\end{lemma}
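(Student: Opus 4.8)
The plan is to estimate $\int_{K}^{\infty}|(Mf_j)'|$; the contribution of $(-\infty,-K)$ is handled by the symmetric argument, and throughout I use the reduction $f_j\ge 0$, so that $|f_j|=f_j$. First I fix scales. Given $\eta>0$, since $f_j\to f$ in $W^{1,1}(\mathbb{R})$ I may choose $S>0$ and then $j$ large so that $\int_{|t|>S}|f_j'|<\eta$ (uniform tail control of the derivatives), and since $\|f_j\|_1\to\|f\|_1$ I may also assume $\|f_j\|_1\le 2\|f\|_1$ for $j$ large; the threshold $K>2S$ will be chosen at the end. For a.e.\ $x>K$ the function $Mf_j$ is differentiable (Lemma \ref{kurkaslemma}) and the supremum defining $Mf_j(x)$ is attained at some radius $r_x\ge 0$, because $\tfrac{1}{2r}\int_{x-r}^{x+r}f_j$ is continuous in $r$, tends to $f_j(x)$ as $r\to 0^+$, and tends to $0$ as $r\to\infty$. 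I then split $(K,\infty)$ according to whether $r_x\ge x/2$ (the \emph{large–radius} regime) or $r_x<x/2$ (the \emph{small–radius} regime).

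In the large–radius regime $r_x>0$, so Luiro's formula (Proposition \ref{Luirosformulacentered}) gives $(Mf_j)'(x)=\tfrac{1}{2r_x}\int_{x-r_x}^{x+r_x}f_j'=\tfrac{f_j(x+r_x)-f_j(x-r_x)}{2r_x}$. Comparing the optimal average against larger radii $\rho>r_x$ and letting $\rho\to r_x^+$ yields the elementary enlargement inequality $f_j(x-r_x)+f_j(x+r_x)\le 2Mf_j(x)$, and hence $|(Mf_j)'(x)|\le Mf_j(x)/r_x\le \|f_j\|_1/(2r_x^2)$, using $Mf_j(x)\le \|f_j\|_1/(2r_x)$. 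Since here $r_x\ge x/2$, this is at most $2\|f_j\|_1/x^2$, which integrates over $(K,\infty)$ to $2\|f_j\|_1/K\le 4\|f\|_1/K$. This is the crucial quantitative point: the optimal radius being comparable to $x$ upgrades the trivial $1/r_x$ bound to an integrable $1/x^2$ tail.

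In the small–radius regime, if $r_x<x/2$ then the optimal ball $[x-r_x,x+r_x]$ is contained in $(x/2,\infty)\subset(K/2,\infty)=:I$, so that $Mf_j(x)=M_I f_j(x)$ (one inequality is $Mf_j\ge M_I f_j$, always valid, and the reverse holds because the maximizing ball lies in $I$). Writing $\Omega$ for the set of such $x$, we have $Mf_j\ge M_I f_j$ everywhere with equality on $\Omega$, so the two absolutely continuous functions have equal derivatives a.e.\ on $\Omega$; by Lemma \ref{kurkaslemma} applied on $I$ this gives $\int_\Omega|(Mf_j)'|=\int_\Omega|(M_I f_j)'|\le C\int_I|f_j'|\le C\int_{|t|>S}|f_j'|<C\eta$. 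This is exactly the mechanism used for the set $A^1_{\ell,j}$ in the proof of Lemma \ref{controloverfinitenumerofpoints}, localized now to a tail interval rather than a neighborhood of a point.

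Combining the two regimes yields $\int_K^\infty|(Mf_j)'|\le 4\|f\|_1/K+C\eta$; choosing $\eta$ small and then $K$ large (with $j$ large) makes this smaller than $\varepsilon/2$, and the mirror-image estimate on $(-\infty,-K)$, using $I=(-\infty,-K/2)$, finishes the proof. I expect the main obstacle to be the large–radius regime, where one must convert the pointwise derivative identity into genuinely integrable $1/x^2$ decay: this hinges on the enlargement inequality $f_j(x-r_x)+f_j(x+r_x)\le 2Mf_j(x)$ combined with $Mf_j(x)\le\|f_j\|_1/(2r_x)$, together with the measure-theoretic identification of derivatives on the contact set $\Omega$ that reduces the small–radius contribution to the uniformly small tail $\int_{|t|>S}|f_j'|$ via Kurka's bound.
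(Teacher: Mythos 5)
The paper does not actually prove this lemma: it is imported verbatim as \cite[Proposition 4.11]{BM2019}, so there is no in-paper argument to compare against. Your proof is a correct, self-contained reconstruction, and it runs along the same lines as the standard tail estimates in that literature: the dichotomy between maximizing radii $r_x\ge x/2$ and $r_x<x/2$, the use of Luiro's formula together with the first-order optimality condition $f_j(x-r_x)+f_j(x+r_x)\le 2Mf_j(x)$ (which is exactly what upgrades the non-integrable $\|f_j'\|_1/(2r_x)$ bound to the integrable $2\|f_j\|_1/x^2$), and the identification $Mf_j=M_I f_j$ with $I=(K/2,\infty)$ on the small-radius set followed by Kurka's bound is the same mechanism the paper itself uses for the set $A^1_{\ell,j}$ in Lemma \ref{controloverfinitenumerofpoints}. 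All the individual steps check out: the sup is attained (with the convention $r_x=0$ when $Mf_j(x)=f_j(x)$, which your small-radius case absorbs since $M_If_j(x)\ge f_j(x)$ by continuity), the positivity reduction $f_j\ge0$ is genuinely needed to pass from $|f_j(x+r_x)-f_j(x-r_x)|$ to $f_j(x+r_x)+f_j(x-r_x)$, and the order of quantifiers ($\eta$, then $S$, then $K>2S$, then $j$ large) matches the statement. The only point you leave implicit is the Lebesgue measurability of the two regimes (equivalently of the set $\Omega$), needed both to integrate over them and to invoke the a.e.\ equality of derivatives on a set of coincidence; this is routine (e.g.\ $\{x:\sup_{r\ge x/2}\intav{[x-r,x+r]}f_j=Mf_j(x)\}$ is measurable by continuity of the averages) and is glossed over in the paper's own analogous arguments as well, so I would not count it as a gap.
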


\section{The auxiliary maximal operators}
\begin{figure}[ht]
\includegraphics[scale = 0.8]{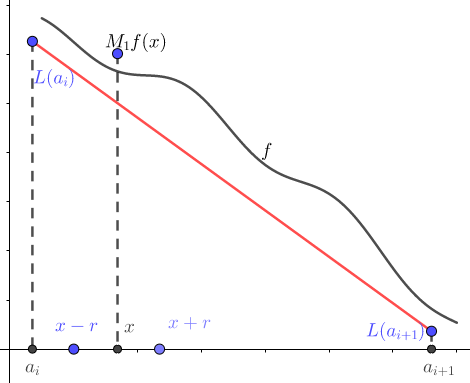}
\caption{In the figure the scope of $L$ is $\alpha_i$ and  $[x-r,x+r]$ is an admissible interval for $x$ and $M_1$.}
\label{stargraphextremizer}
\end{figure}
In this section we define the main objects of our work. Let us take $\varepsilon>0$ and consider $g_{\varepsilon}=\displaystyle \sum_{i=0}^{N}\alpha_{i}\chi_{(a_i,a_{i+1})}$ such that $\|f'-g_{\varepsilon}\|_{1}<\varepsilon$. That is, we approximate the derivative of our limit function by a simple function. We write $a_0=-\infty$, $a_{N+1}=\infty$ and $\mathcal{P}:=\{a_1,\dots, a_N\}$. We assume that $\mathcal{P}$ is non-empty. We observe that $\alpha_{0}=\alpha_{n}=0$. Now, we define our auxiliary maximal operators $M_1,M_2$ as follows: for any $h\in L^{1}(\mathbb{R})$ and $x\in \mathbb{R}$ we set $$M_1h(x):=\sup_{r<d(x,\mathcal{P})}\intav{[x-r,x+r]}|h|,$$
and $$M_2h(x):=\sup_{r\ge d(x,\mathcal{P})}\intav{[x-r,x+r]}|h|.$$
We now state some basic results about our operators $M_1,M_2.$
\begin{lemma}\label{uniformconvergencecentered}
Let $f_j\to f$ in $W^{1,1}(\mathbb{R}).$ We have $$M_if_j\to M_if$$
uniformly, for $i=1,2.$
\end{lemma}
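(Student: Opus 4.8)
The plan is to deduce the uniform convergence of the maximal operators directly from the uniform convergence of $f_j$ to $f$, which in turn follows from the $W^{1,1}$ convergence. Every $g\in W^{1,1}(\mathbb{R})$ is, after modification on a null set, absolutely continuous with $g(x)=\int_{-\infty}^{x}g'(t)\,\dt$, whence $\|g\|_{\infty}\le \|g'\|_{1}$. Applying this to $g=f_j-f$ gives $\|f_j-f\|_{\infty}\le \|f_j'-f'\|_{1}\to 0$, so $f_j\to f$ uniformly on $\mathbb{R}$.

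Next I would use the elementary inequality $|\sup_{\alpha}A_{\alpha}-\sup_{\alpha}B_{\alpha}|\le \sup_{\alpha}|A_{\alpha}-B_{\alpha}|$, valid for any two bounded families indexed by the same set. For $i=1,2$ the operator $M_i h(x)$ is precisely the supremum of the averages $\intav{[x-r,x+r]}|h|$ as $r$ ranges over the \emph{same} set of admissible radii $R_i(x)$ (namely $0<r<d(x,\mathcal{P})$ for $i=1$, and $r\ge d(x,\mathcal{P})$ for $i=2$), independently of whether $h=f_j$ or $h=f$. Hence, for each fixed $x$,
\[
|M_i f_j(x)-M_i f(x)|\le \sup_{r\in R_i(x)}\left|\intav{[x-r,x+r]}|f_j|-\intav{[x-r,x+r]}|f|\right|.
\]

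To estimate a single average difference I would invoke the reverse triangle inequality $\big||f_j|-|f|\big|\le |f_j-f|$, so that for any interval $[x-r,x+r]$,
\[
\left|\intav{[x-r,x+r]}|f_j|-\intav{[x-r,x+r]}|f|\right|\le \intav{[x-r,x+r]}|f_j-f|\le \|f_j-f\|_{\infty}.
\]
The right-hand side is independent of $x$, of $r$, and of $i$; combining it with the previous display yields $\|M_i f_j-M_i f\|_{\infty}\le \|f_j-f\|_{\infty}$ for $i=1,2$, which tends to $0$. This is exactly the claimed uniform convergence. Because the bound is completely uniform in the point $x$ and in the admissible radii, the same computation settles both operators at once, so there is essentially no obstruction beyond the initial Sobolev embedding $W^{1,1}(\mathbb{R})\hookrightarrow L^{\infty}(\mathbb{R})$ used to upgrade $W^{1,1}$ convergence to uniform convergence. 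The only bookkeeping point is that, for each $x$, the supremum defining $M_i$ is taken over the same radius set $R_i(x)$ for $f_j$ and for $f$ (with the same convention at the finitely many points where $R_1(x)$ is empty), so the comparison of suprema is legitimate.
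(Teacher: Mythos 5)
Your proof is correct and follows essentially the same route as the paper: the paper's one-line argument $|M_i f_j - M_i f|\le M_i(f_j-f)\le \|f_j-f\|_\infty$ is exactly your comparison of suprema over the same radius set combined with the trivial bound on averages, together with the (implicit in the paper) Sobolev embedding $\|f_j-f\|_\infty\le\|f_j'-f'\|_1$. You have merely spelled out the steps, including the harmless bookkeeping at the finitely many points where the radius set for $M_1$ is empty.
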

\begin{proof}
It follows from the fact that $|M_{i}f_j-M_{i}f|\le |M_{i}(f_j-f)|\le \|f_j-f\|_{\infty}.$
\end{proof}
\subsection{Properties of $M_2$}For any $K,\delta>0$ such that the intervals $(a_i-\delta,a_i+\delta)$ are pairwise disjoint, let us define $U_{\delta,K}=(-K,K)\setminus \cup_{i=1}^{n} (a_i-\delta,a_i+\delta).$ We observe that for any $x\in U_{\delta,K}$ and any $g\in W^{1,1}(\mathbb{R})$ there exists a radius $r_x\ge \delta$ such that $\intav{[x-r_x,x+r_x]}|g|=M_2\,g(x).$ 
 We have then the following.
\begin{lemma}
For any $g\in W^{1,1}(\mathbb{R})$ we have that $M_2\,g$ is weakly differentiable in $U_{\delta,K}.$
\end{lemma}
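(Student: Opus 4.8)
The plan is to show that $M_2\,g$ is Lipschitz continuous on $U_{\delta,K}$; since Lipschitz continuity in a neighborhood of each point of an open set yields absolute continuity on compact subintervals, this gives at once the desired weak differentiability (with a bounded weak derivative). The guiding principle, as emphasized in the introduction, is that on $U_{\delta,K}$ the admissible radii for $M_2$ are bounded below by $\delta$, and this lower bound is what lends a smooth behavior to the operator. I would first record two elementary facts: since $g\in W^{1,1}(\R)$ we have $g\in L^{\infty}(\R)$, so $M_2\,g(x)\le \|g\|_{\infty}$ for every $x$ (each value being an average of $|g|$); and for $x\in U_{\delta,K}$ the remark preceding the lemma furnishes an admissible radius $r_x\ge d(x,\mathcal{P})\ge \delta$ with $\intav{[x-r_x,x+r_x]}|g|=M_2\,g(x)$.

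The estimate itself runs as follows. Fix $x,y\in U_{\delta,K}$ and put $h=|x-y|$, and choose $r=r_x+h$. Since $d(\cdot,\mathcal{P})$ is $1$-Lipschitz we have $d(y,\mathcal{P})\le d(x,\mathcal{P})+h\le r_x+h=r$, so $r$ is admissible for $M_2$ at the point $y$; moreover one checks directly that $[x-r_x,x+r_x]\subset[y-r,y+r]$. Using $|g|\ge 0$ and $\int_{x-r_x}^{x+r_x}|g|=2r_x\,M_2\,g(x)$ this yields
\[
M_2\,g(y)\ge \intav{[y-r,y+r]}|g|\ge \frac{1}{2r}\int_{x-r_x}^{x+r_x}|g| = \frac{r_x}{r_x+h}\,M_2\,g(x)\ge M_2\,g(x)-\frac{\|g\|_{\infty}}{\delta}\,h,
\]
where the final inequality uses $r_x\ge\delta$ together with $M_2\,g(x)\le\|g\|_{\infty}$. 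Exchanging the roles of $x$ and $y$ then gives $|M_2\,g(x)-M_2\,g(y)|\le (\|g\|_{\infty}/\delta)\,|x-y|$.

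I expect the only genuinely delicate point to be that the constraint $r\ge d(x,\mathcal{P})$ defining $M_2$ depends on the base point, so a near-optimal radius for $x$ need not be admissible for a nearby $y$. I handle this by enlarging the competing radius from $r_x$ to $r_x+h$: this keeps it admissible at $y$ (again by the Lipschitz property of $d(\cdot,\mathcal{P})$), while the lower bound $r_x\ge\delta$ guarantees that passing from $r_x$ to $r_x+h$ perturbs the corresponding average by at most $O(h/\delta)$. With the Lipschitz estimate established, $M_2\,g$ is absolutely continuous on compact subintervals of $U_{\delta,K}$, hence weakly differentiable there, with $|(M_2\,g)'|\le \|g\|_{\infty}/\delta$ almost everywhere.
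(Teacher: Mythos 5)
Your proof is correct and follows essentially the same route as the paper: both establish a Lipschitz bound on $U_{\delta,K}$ by taking an optimal radius $r_x\ge\delta$ at $x$, enlarging it by $|x-y|$ to obtain an admissible competitor at $y$, and using the lower bound $\delta$ on the radii to control the resulting change in the average. The only differences are cosmetic (you bound the average by $\|g\|_{\infty}$ where the paper uses $\|g\|_1$ with the Lipschitz constant of $r\mapsto 1/(2r)$ on $[\delta,\infty)$, and you verify admissibility at $y$ slightly more explicitly), so no further comment is needed.
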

\begin{proof}
For any $x,y\in U_{\delta,K}$ with $M_2\,g(x)>M_{2}\,g(y)$, we have \begin{align*}M_{2}\,g(x)-M_{2}\,g(y)=\intav{[x-r_x,x+r_x]}|g|-\intav{[y-r_y,y+r_y]}|g|&\le  \intav{[x-r_x,x+r_x]}|g|-\intav{[y+|x-y|-r_x,y+|x-y|+r_x]}|g|\\
&\le \|g\|_1\left(\frac{1}{2r_x}-\frac{1}{2r_x+2|x-y|}\right)\le \|g\|_1C(\delta)|x-y|,
\end{align*} 
where $C(\delta)$ is the Lipschitz constant of the function $\frac{1}{2x}$ in the set $[\delta,\infty).$ Therefore, we have that $M_2\,g$ is Lipschitz in this set, from where we conclude our lemma. 
\end{proof}
In the next result we present a formula for the derivative of $M_2\,g$ that has similarities with the one presented in \cite[Lemma 10]{GRK}. We use the notation $x\pm \infty=\pm \infty.$
\begin{lemma}\label{formuladerivativecnentered}
Let $g\in W^{1,1}(\mathbb{R})$. Let $x\in U_{\delta,K}$ be such that $M_2\,g$ is differentiable at $x$, and let $r_x$ such that $M_2\,g(x)=\intav{[x-r_x,x+r_x]}|g|$ with $r_x\ge d(x,\mathcal{P})$. Assume that $\frac{a_i+a_{i+1}}{2}<x<a_{i+1}.$ 
Then, we have $$(M_2\,g)'(x)=\frac{\int_{[x-r_x,x+r_x]}|g|}{2r_x^2}-\frac{|g|(x-r_x)}{r_x}.$$
\end{lemma}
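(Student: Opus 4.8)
The plan is to treat $M_2\,g$ near $x$ as the upper envelope of the two–parameter family $\Phi(y,r):=\intav{[y-r,y+r]}|g|=\frac{1}{2r}\int_{y-r}^{y+r}|g|$, and to read off the derivative from whichever admissible radius realizes the supremum at $x$. Since $g\in W^{1,1}(\mathbb{R})$ the function $|g|$ is continuous, so $\Phi$ is jointly $C^1$ on $\mathbb{R}\times(0,\infty)$, with $\partial_y\Phi(y,r)=\frac{1}{2r}\bigl(|g|(y+r)-|g|(y-r)\bigr)$ and $\partial_r\Phi(y,r)=-\frac{1}{2r^2}\int_{y-r}^{y+r}|g|+\frac{1}{2r}\bigl(|g|(y+r)+|g|(y-r)\bigr)$. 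Under the hypothesis $\frac{a_i+a_{i+1}}{2}<x<a_{i+1}$ one has $d(x,\mathcal{P})=a_{i+1}-x$, and the same identity holds for every $y$ in a small neighborhood of $x$; moreover the maximizing radius is finite whenever $g\not\equiv 0$, because then the supremum $M_2\,g(x)$ is positive while $\Phi(x,r)\le\frac{1}{2r}\|g\|_1\to 0$ as $r\to\infty$, so the sup over $r\ge d(x,\mathcal{P})$ is attained at a finite $r_x$ (the degenerate value $r_x=\infty$ can occur only when $g\equiv 0$, in which case the formula reads $0=0$ under the convention $x\pm\infty=\pm\infty$).

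The engine of the argument is an elementary envelope observation: if $\Psi$ is differentiable at $x$, satisfies $\Psi\le M_2\,g$ on a neighborhood of $x$, and $\Psi(x)=M_2\,g(x)$, then $M_2\,g-\Psi$ is non-negative with a zero at $x$, hence attains a local minimum there, so that $(M_2\,g)'(x)=\Psi'(x)$. I would apply this with a comparison function $\Psi(y)=\Phi(y,r(y))$ built from an admissible radius $r(\cdot)$ with $r(x)=r_x$: admissibility, i.e. $r(y)\ge d(y,\mathcal{P})$, guarantees $\Phi(y,r(y))\le M_2\,g(y)$, while $r(x)=r_x$ forces equality at $x$.

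I would then split according to whether $r_x$ lies at the boundary of the admissible range or in its interior. In the boundary case $r_x=d(x,\mathcal{P})=a_{i+1}-x$, take $r(y)=a_{i+1}-y$, the minimal admissible radius for $y$ near $x$; here the right endpoint $y+r(y)\equiv a_{i+1}$ is frozen while the left endpoint $2y-a_{i+1}$ moves at twice the rate, and differentiating $\Psi(y)=\frac{1}{2(a_{i+1}-y)}\int_{2y-a_{i+1}}^{a_{i+1}}|g|$ at $y=x$ produces exactly $\frac{\int_{[x-r_x,x+r_x]}|g|}{2r_x^2}-\frac{|g|(x-r_x)}{r_x}$. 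This is precisely where the hypothesis placing $x$ to the right of the midpoint enters: it fixes which endpoint is frozen, hence the asymmetric shape of the formula. In the interior case $r_x>d(x,\mathcal{P})$, the constant radius $r(y)\equiv r_x$ is admissible throughout a neighborhood, so the envelope observation gives $(M_2\,g)'(x)=\partial_y\Phi(x,r_x)=\frac{1}{2r_x}\bigl(|g|(x+r_x)-|g|(x-r_x)\bigr)$; since $r_x$ is now an interior maximizer of the $C^1$ map $r\mapsto\Phi(x,r)$, the first–order condition $\partial_r\Phi(x,r_x)=0$ yields $|g|(x+r_x)+|g|(x-r_x)=\frac{1}{r_x}\int_{x-r_x}^{x+r_x}|g|$, and eliminating $|g|(x+r_x)$ between the two identities returns the same expression.

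The main obstacle is organizing the boundary/interior dichotomy cleanly and, above all, ensuring the comparison radius is admissible on a full two–sided neighborhood rather than only one side: for $y<x$ in the boundary case the constant radius $r_x$ would fall below $d(y,\mathcal{P})$ and leave the admissible range, and it is exactly the constraint–tracking choice $r(y)=a_{i+1}-y$ that repairs this and makes the envelope estimate two–sided. The remaining points—continuity of $|g|$ at the moving endpoints (so that $\Psi$ is genuinely differentiable), joint $C^1$-regularity of $\Phi$, and the trivial degenerate case $r_x=\infty$—are routine once this structure is in place.
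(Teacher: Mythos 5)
Your proof is correct, and it buys the same formula through a slightly different organization than the paper's. The paper makes no case distinction: for both one-sided difference quotients it compares $M_2\,g$ at the shifted center $x\mp h$ with the single family of admissible intervals whose right endpoint is frozen at $x+r_x$ (namely $[x-r_x-2h,\,x+r_x]$ for the left quotient and $[x-r_x+2h,\,x+r_x]$ for the right quotient, the latter being admissible precisely because $x$ lies to the right of the midpoint and $r_x\ge a_{i+1}-x$), and then squeezes using the assumed differentiability of $M_2\,g$ at $x$. Your boundary case $r_x=d(x,\mathcal{P})$ with the constraint-tracking radius $r(y)=a_{i+1}-y$ is essentially this same frozen-endpoint idea packaged as an envelope/touching-function lemma. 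Where you genuinely diverge is the interior case $r_x>d(x,\mathcal{P})$: you use the constant-radius comparison to get $\partial_y\Phi(x,r_x)$ and then invoke the first-order condition $\partial_r\Phi(x,r_x)=0$ to eliminate $|g|(x+r_x)$ and recover the asymmetric expression. That is a clean and correct alternative (the stationarity is legitimate since $r_x$ is an interior maximizer of the continuous, $C^1$-in-$r$ map $r\mapsto\Phi(x,r)$ on $[d(x,\mathcal{P}),\infty)$, and $|g|$ is continuous because $g\in W^{1,1}$), and it has the side benefit of exposing why the formula is independent of the boundary value $|g|(x+r_x)$. The paper's uniform difference-quotient argument avoids the case split and never needs the Euler--Lagrange relation; your version needs the extra observation but is arguably more transparent about the geometry. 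Your remarks on attainment of the supremum and the degenerate $r_x=\infty$ case are fine but not needed, since the lemma already posits a maximizing $r_x$.
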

\begin{proof}
Observe that, for $h>0$, we have
\begin{align*}
 \frac{M_2\,g(x)-M_2\,g(x-h)}{h}&\le \frac{\frac{\int_{[x-r_x,x+r_x]}|g|}{2r_x}-\frac{\int_{[x-r_x-2h,x+r_x]}|g|}{2r_x+2h}}{h}\\
 &=\frac{\frac{\int_{[x-r_x,x+r_x]}|g|}{2r_x}-\frac{\int_{[x-r_x,x+r_x]}|g|}{2r_x+2h}-\frac{\int_{[x-r_x-2h,x-r_x]}|g|}{2r_x+2h}}{h}\\
 &\to \frac{\int_{[x-r_x,x+r_x]}|g|}{2r_x^2}-\frac{|g|(x-r_x)}{r_x}
\end{align*}
when $h\to 0,$ where we use the continuity of $g$. Therefore $(M_2\,g)'(x)\le \frac{\int_{[x-r_x,x+r_x]}|g|}{2r_x^2}-\frac{|g|(x-r_x)}{2r_x}$. Also, for $h>0$, since $x<a_i\le x+r_x$ (and hence the interval $[x-r_x+2h,x+r_x]$ is admissible for $x+h$ for the operator $M_2$), we have
\begin{align*}
    \frac{M_2\,g(x+h)-M_2\,g(x)}{h}&\ge \frac{\frac{\int_{[x-r_x+2h,x+r_x]}|g|}{2r_x-2h}-\frac{\int_{[x-r_x,x+r_x]}|g|}{2r_x}}{h}\\
    &=\int_{[x-r_x,x+r_x]}|g|\left(\frac{\frac{1}{2r_x-2h}-\frac{1}{2r_x}}{h}\right)-\frac{\int_{[x-r_x,x-r_x+2h]}|g|}{(2r_x-2h)h}\\
    &\to \frac{\int_{[x-r_x,x+r_x]}|g|}{2r_x^2}-\frac{|g|(x-r_x)}{r_x}
\end{align*}
when $h\to 0$, and therefore $(M_2\,g)'(x)\ge \frac{\int_{[x-r_x,x+r_x]}|g|}{2r_x^2}-\frac{|g|(x-r_x)}{r_x},$ from where we conclude our lemma.
\end{proof}
\begin{lemma}\label{convergenceofradiuscentered}
Let $f_j\to f$ in $W^{1,1}(\mathbb{R})$. Let $x\notin \mathcal{P}$. Assume that $M_2f_j(x)=\intav{[x-r_{j,x},x+r_{j,x}]}|f_j|$ for some $r_{x,j}\ge d(x,\mathcal{P})$. If $r_{j,x}\to r$ then $$M_2f(x)=\intav{[x-r,x+r]}|f|.$$
\end{lemma}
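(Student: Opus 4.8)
The plan is to pass to the limit in the defining identity $M_2f_j(x)=\intav{[x-r_{j,x},x+r_{j,x}]}|f_j|$, using the uniform convergence $M_2f_j\to M_2f$ from Lemma \ref{uniformconvergencecentered} on the left-hand side and the convergence of the averages on the right-hand side, the latter being forced by $r_{j,x}\to r$ together with $f_j\to f$ in $L^1(\mathbb{R})$. First I would record the elementary but essential facts that keep the denominators harmless: since $x\notin\mathcal{P}$ and $\mathcal{P}$ is finite, $d(x,\mathcal{P})>0$; as $r_{j,x}\ge d(x,\mathcal{P})$ for every $j$, the limit satisfies $r\ge d(x,\mathcal{P})>0$. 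In particular, once $r$ is known to be finite, $[x-r,x+r]$ is an admissible interval for $M_2$ at $x$, so $\intav{[x-r,x+r]}|f|\le M_2f(x)$ holds automatically and only the reverse inequality must be produced.

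Before the main step I would rule out $r=+\infty$. If $r_{j,x}\to\infty$, then $M_2f_j(x)\le \|f_j\|_1/(2r_{j,x})\to 0$, since $\|f_j\|_1\to\|f\|_1$ stays bounded. On the other hand, $M_2f_j(x)\to M_2f(x)$ by Lemma \ref{uniformconvergencecentered}, and $M_2f(x)>0$ because $f\ge 0$, $f\neq 0$ and $f\in L^1(\mathbb{R})$ force $\intav{[x-R,x+R]}|f|>0$ for all sufficiently large $R$. This contradiction shows $r<\infty$.

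The core step is the convergence $\intav{[x-r_{j,x},x+r_{j,x}]}|f_j|\to\intav{[x-r,x+r]}|f|$. For the numerators I would estimate
\[
\Bigl|\int_{x-r_{j,x}}^{x+r_{j,x}}|f_j|-\int_{x-r}^{x+r}|f|\Bigr|\le \|f_j-f\|_1+\int_{[x-r_{j,x},x+r_{j,x}]\triangle[x-r,x+r]}|f|,
\]
where the first term tends to $0$ by $L^1$ convergence and the second tends to $0$ because the symmetric difference has measure $2|r_{j,x}-r|\to 0$ and the integral of $|f|$ is absolutely continuous. Combining this with $1/(2r_{j,x})\to 1/(2r)$ (legitimate precisely because $r>0$) gives the convergence of the averages. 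Passing to the limit in $M_2f_j(x)=\intav{[x-r_{j,x},x+r_{j,x}]}|f_j|$ and invoking Lemma \ref{uniformconvergencecentered} on the left-hand side then yields $M_2f(x)=\intav{[x-r,x+r]}|f|$, as claimed.

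The only place requiring genuine care is the convergence of the averages, and within it the moving-domain term, which is handled by absolute continuity of the Lebesgue integral. The positivity $d(x,\mathcal{P})>0$ and $M_2f(x)>0$ are exactly what keep the denominator bounded away from $0$ and exclude the degenerate case $r=\infty$; beyond these bookkeeping facts I do not expect any deeper obstacle.
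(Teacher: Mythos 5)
Your argument is correct, and it is precisely the standard limiting argument that the paper invokes by citing \cite[Lemma 12]{CMP2017}: uniform convergence $M_2f_j\to M_2f$ on the left-hand side, and convergence of the averages $\intav{[x-r_{j,x},x+r_{j,x}]}|f_j|\to\intav{[x-r,x+r]}|f|$ on the right, with $r\ge d(x,\mathcal{P})>0$ keeping the denominators harmless. Your explicit exclusion of the degenerate case $r=+\infty$ (via $M_2f(x)>0$, which uses the standing reduction $f\neq 0$) is a detail the paper leaves implicit, but there is no substantive difference in approach.
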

\begin{proof}
This follows as \cite[Lemma 12]{CMP2017}.
\end{proof}
Now we can conclude the pointwise a.e convergence at the derivative level.
\begin{lemma}\label{pointwiseconvergencederivativelevel}
Let $f_j\to f$ in $W^{1,1}(\mathbb{R})$. Then, for $a.e$ $x\in U_{\delta,K}$, we have $$(M_2f_j)'(x)\to (M_2f)'(x).$$
\end{lemma}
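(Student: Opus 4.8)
The plan is to argue by subsequence extraction, combining the derivative formula of Lemma~\ref{formuladerivativecnentered} with the radius convergence of Lemma~\ref{convergenceofradiuscentered}. First I would discard a negligible subset of $U_{\delta,K}$: the finitely many midpoints $\tfrac{a_i+a_{i+1}}{2}$, and the countable union of null sets where $M_2f$ or some $M_2f_j$ fails to be differentiable (each is Lipschitz on $U_{\delta,K}$, hence differentiable a.e.). Fix $x$ in the remaining full-measure set, say with $\tfrac{a_i+a_{i+1}}{2}<x<a_{i+1}$, the opposite side being symmetric. For each $j$ choose $r_{j,x}\ge d(x,\mathcal{P})\ge \delta$ realizing $M_2f_j(x)=\intav{[x-r_{j,x},x+r_{j,x}]}|f_j|$. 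Since $f\not\equiv 0$ forces $M_2f(x)>0$, and $M_2f_j(x)\to M_2f(x)$ by Lemma~\ref{uniformconvergencecentered}, while $\intav{[x-r,x+r]}|f_j|\le \|f_j\|_1/(2r)$, one gets $r_{j,x}\le \|f_j\|_1/(2M_2f_j(x))\le R$ for a fixed $R$; thus the radii stay in the compact interval $[\delta,R]$.

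To prove $(M_2f_j)'(x)\to (M_2f)'(x)$, I would show that every subsequence admits a further subsequence along which convergence holds to the \emph{same} limit. Given a subsequence, compactness yields $r_{j_k,x}\to r\in[\delta,R]$; by Lemma~\ref{convergenceofradiuscentered} this $r$ is a maximizing radius for $M_2f$ at $x$, so Lemma~\ref{formuladerivativecnentered} expresses $(M_2f)'(x)$ through it. Applying the same formula to each $M_2f_{j_k}$ (differentiable at $x$ by our choice of $x$, with radius $r_{j_k,x}$ on the same side of the midpoint, so the same case of the formula applies) reduces the problem to passing to the limit in $\frac{\int_{[x-r_{j_k,x},x+r_{j_k,x}]}|f_{j_k}|}{2r_{j_k,x}^2}-\frac{|f_{j_k}|(x-r_{j_k,x})}{r_{j_k,x}}$.

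The first term converges at once from $f_{j_k}\to f$ in $L^1(\mathbb{R})$, $r_{j_k,x}\to r$, and $r\ge\delta>0$. The term I expect to be the only delicate one is the boundary term $\tfrac{|f_{j_k}|(x-r_{j_k,x})}{r_{j_k,x}}$, since it evaluates $f_{j_k}$ at the \emph{moving} point $x-r_{j_k,x}$, where $L^1$ control alone is insufficient. I would handle it via $|f_{j_k}(x-r_{j_k,x})-f(x-r)|\le \|f_{j_k}-f\|_\infty+|f(x-r_{j_k,x})-f(x-r)|$, using the uniform convergence $f_{j_k}\to f$ (from $W^{1,1}(\mathbb{R})\hookrightarrow C_0(\mathbb{R})$, as already exploited in Lemma~\ref{uniformconvergencecentered}) for the first summand and the continuity of $f$ together with $r_{j_k,x}\to r$ for the second. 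Both terms then converge to the values that Lemma~\ref{formuladerivativecnentered} assigns to $M_2f$, i.e. to $(M_2f)'(x)$. As this limit depends neither on the chosen subsequence nor on which maximizing radius $r$ is reached, the full sequence converges, which proves the lemma.
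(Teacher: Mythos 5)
Your proposal is correct and follows essentially the same route as the paper: both arguments extract a convergent subsequence of the maximizing radii (bounded below by $\delta$ and above via the $L^1$ mass of $f$), invoke Lemma \ref{convergenceofradiuscentered} to identify the limit radius as a maximizing radius for $M_2f$, and pass to the limit in the derivative formula of Lemma \ref{formuladerivativecnentered}, with the boundary term handled by uniform convergence and continuity of $f$. Your phrasing via ``every subsequence has a further subsequence converging to the same limit'' is just the contrapositive of the paper's contradiction argument, and your explicit remark that the limit value is independent of the chosen maximizing radius is a welcome clarification of a point the paper leaves implicit.
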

\begin{proof}
Let us assume that $x$ is such that $M_{2}f_j$, for every $j$, and $M_{2}f$ are differentiable at the point $x$ and $x\in \left(\frac{a_i+a_{i+1}}{2},a_{i+1}\right)$ for some $i$. The other case follows analogously. Now, for every $j$, let us take $r_{x,j}\ge d(x,\mathcal{P})$ such that $M_2f_j(x)=\intav{[x-r_{j,x},x+r_{j,x}]}|f_j|$. By Lemma \ref{formuladerivativecnentered} we have that $$(M_2f_j)'(x)=\frac{\int_{[x-r_{x,j},x+r_{x,j}]}|f_j|}{2r_{x,j}^2}-\frac{|f_j|(x-r_{x,j})}{r_{x,j}}.$$ 
Assume that there exists a subsequence $\{j_k\}_{k\in \mathbb{N}}$ such that $|(M_2f_{j_k})'(x)-(M_2f)'(x)|>\rho>0$. Let us take $R>0$ such that $\int_{[x-R,x+R]}|f|>\frac{\|f\|_1}{2}$. For $j$ big enough we have that $\int_{[x-R,x+R]}|f_j|>\frac{\|f_j\|_1}{2}$. Since $$\frac{\|f_j\|_1}{4R}<\frac{\int_{[x-R,x+R]}|f_j|}{2R}\le \intav{[x-r_{x,j},x+r_{x,j}]}|f_j|\le \frac{\|f_j\|_1}{2r_{x,j}},$$
we note that $r_{x,j}\le 2R.$ Therefore, there exists a subsequence of $\{j_k\}_{k\in \mathbb{N}}$ (that we keep calling $\{j_k\}_{k\in \mathbb{N}}$ with a harmless abuse of notation) such that $r_{x,j_k}\to r>0$. Thus, by Lemma \ref{convergenceofradiuscentered}, we have \begin{align*}
(M_2f_j)'(x)&=\frac{\int_{[x-r_{x,j},x+r_{x,j}]}|f_j|}{2r_{x,j}^2}-\frac{|f_j|(x-r_{x,j})}{r_{x,j}}\\
&\to \frac{\int_{[x-r,x+r]}|f|}{2r^2}-\frac{|f|(x-r)}{r}=(M_2f)'(x).
\end{align*} 
From this we conclude our lemma.
\end{proof}
We are now in position to conclude our desired $L^{1}(U_{\delta,K})$ convergence.
\begin{proposition}\label{integralconvergencem2}
We have $(M_2f_j)'\to (M_2f)'$ in $L^1(U_{\delta,K}).$
\end{proposition}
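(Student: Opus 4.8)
The plan is to upgrade the pointwise a.e.\ convergence from Lemma \ref{pointwiseconvergencederivativelevel} to $L^1(U_{\delta,K})$ convergence via a dominated-convergence / Vitali argument, since $U_{\delta,K}$ has finite measure. Because we already know $(M_2f_j)'(x)\to (M_2f)'(x)$ for a.e.\ $x\in U_{\delta,K}$, the missing ingredient is a uniform integrability (equi-integrability) statement for the family $\{(M_2f_j)'\}_j$ on $U_{\delta,K}$, together with control of the total mass. The main obstacle, as I see it, is establishing this equi-integrability, i.e.\ ruling out concentration of $|(M_2f_j)'|$ on small sets, uniformly in $j$.

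First I would exploit the explicit formula from Lemma \ref{formuladerivativecnentered}. On $U_{\delta,K}$ each competing radius satisfies $r_{x,j}\ge\delta$, so the first term is harmless: one has the pointwise bound
\begin{align*}
\frac{\int_{[x-r_{x,j},x+r_{x,j}]}|f_j|}{2r_{x,j}^2}\le \frac{\|f_j\|_1}{2\delta^2},
\end{align*}
which is uniformly bounded (since $\|f_j\|_1\to\|f\|_1$) and hence equi-integrable on the finite-measure set $U_{\delta,K}$. The delicate term is the boundary term $|f_j|(x-r_{x,j})/r_{x,j}$, which can only be controlled pointwise by $\|f_j\|_\infty/\delta$; a crude sup bound is not enough to get $L^1$-convergence, because one must show it does not concentrate. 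Here I would argue that this term is genuinely equi-integrable: using $r_{x,j}\ge\delta$, for any measurable $E\subset U_{\delta,K}$,
\begin{align*}
\int_{E}\frac{|f_j|(x-r_{x,j})}{r_{x,j}}\,\dx\le \frac{1}{\delta}\int_{E}|f_j|(x-r_{x,j})\,\dx,
\end{align*}
and I would change variables to the endpoint $y=x-r_{x,j}$ (or otherwise compare $|f_j|(x-r_{x,j})$ against the equi-integrable family $|f_j|$ composed with a controlled Lipschitz shift), using that $x\mapsto x-r_{x,j}$ has derivative $1-r_{x,j}'$ with $r_{x,j}'$ bounded on $U_{\delta,K}$ by the Lipschitz estimate underlying Lemma \ref{formuladerivativecnentered}, so the Jacobian is bounded. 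This transfers equi-integrability of $\{|f_j|\}_j$ (which follows from $f_j\to f$ in $W^{1,1}$, hence in $L^1$, whence $\{|f_j|\}$ is equi-integrable) to the boundary term.

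Having established that $\{(M_2f_j)'\}_j$ is equi-integrable on $U_{\delta,K}$ and that $U_{\delta,K}$ has finite Lebesgue measure, I would invoke the Vitali convergence theorem: a.e.\ convergence (Lemma \ref{pointwiseconvergencederivativelevel}) plus equi-integrability on a finite-measure set yields $L^1(U_{\delta,K})$ convergence, which is exactly the assertion of Proposition \ref{integralconvergencem2}. Alternatively, if one prefers to avoid Vitali, the same conclusion follows by a Fatou/Brezis--Lieb type comparison: show that $\int_{U_{\delta,K}}|(M_2f_j)'|\to\int_{U_{\delta,K}}|(M_2f)'|$ using the explicit formula and the convergence of the radii and of $\|f_j\|_1$, which combined with the a.e.\ convergence upgrades to $L^1$ convergence. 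I expect the verification of the change-of-variables estimate for the boundary term, and the bookkeeping needed to make the radius-dependent substitution uniform in $j$, to be the technically heaviest part of the argument.
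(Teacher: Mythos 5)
Your overall strategy --- the formula from Lemma \ref{formuladerivativecnentered}, the lower bound $r_{x,j}\ge\delta$ on admissible radii, and an upgrade of the a.e.\ convergence of Lemma \ref{pointwiseconvergencederivativelevel} to $L^1$ convergence by a domination argument --- is exactly the paper's. But you talk yourself out of the correct (and much simpler) conclusion. Your claim that ``a crude sup bound is not enough'' for the boundary term is mistaken: the bound $|f_j|(x-r_{x,j})/r_{x,j}\le\|f_j\|_\infty/\delta\lesssim\|f_j\|_{W^{1,1}}/\delta$ is a \emph{constant}, uniform in $j$ for $j$ large (since $\|f_j\|_{W^{1,1}}\to\|f\|_{W^{1,1}}$), and $U_{\delta,K}\subset(-K,K)$ has finite measure, so this constant is itself an integrable dominating function on $U_{\delta,K}$. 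A family that is uniformly bounded in $L^\infty$ cannot concentrate on small subsets of a finite-measure set; equi-integrability is automatic. Dominated convergence then finishes the proof, which is precisely what the paper does.

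The detour you propose instead contains the one step that would actually fail. The substitution $y=x-r_{x,j}$ requires the map $x\mapsto r_{x,j}$ to be (at least) differentiable a.e.\ with a controlled Jacobian, and nothing available gives this: the maximizing radius is only a measurable selection, it can jump discontinuously as $x$ varies (the supremum may be attained at several radii), and the Lipschitz estimate preceding Lemma \ref{formuladerivativecnentered} concerns $M_2g$ itself, not the radius function. So if the sup bound genuinely were insufficient, your argument would be stuck. Fortunately it is sufficient; you should simply delete the change-of-variables paragraph and apply dominated convergence (or Vitali, if you prefer) with the constant majorant.
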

\begin{proof}
Let us take $x\in U_{\delta,K}$ with $x\in \left(\frac{a_i+a_{i+1}}{2},a_{i+1}\right)$ and such that $M_2f_j$, for every $j$, and $M_2f$ are all differentiable at the point $x$. The symmetric case follows similarly. By Lemma \ref{formuladerivativecnentered} we have that (using the notation of the previous lemma) 
\begin{align*}\left|(M_2f_j)'(x)\right|&=\left|\frac{\int_{[x-r_{x,j},x+r_{x,j}]}|f_j|}{2r_{x,j}^2}-\frac{|f_j|(x-r_{x,j})}{r_{x,j}}\right|\\
&\le \frac{\|f_j\|_1}{2\delta^2}+\frac{\|f_j\|_{\infty}}{\delta}\le 2\|f\|_{1,1}\left(\frac{1}{2\delta^2}+\frac{1}{\delta}\right),
\end{align*}
for $j$ big enough. Therefore, by combining the dominated convergence theorem with Lemma \ref{pointwiseconvergencederivativelevel}, we conclude our proposition.  
\end{proof}
\subsection{Properties of $M_1$}
About our local operator $M_1$, by Lemma \ref{kurkaslemma} we have that $M_1$ is weakly differentiable in $\mathbb{R}\setminus{\mathcal{P}}$. We now prove the following. 
\begin{proposition}\label{controlofm1}
Let $f_j\to f$ in $W^{1,1}(\mathbb{R})$ (recall that we assume $f_j, f\ge 0$). We have that, for $j$ big enough $$\|(M_1f_j)'-f_j'\|_1\le 2(C+1)\varepsilon,$$ where $C$ is the universal constant appearing in Lemma \ref{kurkaslemma}.
\end{proposition}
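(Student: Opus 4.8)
The plan is to estimate the difference on each of the intervals $I_i=(a_i,a_{i+1})$ separately and then sum. The starting observation is that for $x\in I_i$ the constraint $r<d(x,\mathcal{P})$ is precisely the condition $[x-r,x+r]\subset(a_i,a_{i+1})$, so that $M_1f_j=M_{(a_i,a_{i+1})}f_j$ on $I_i$, where $M_{(a_i,a_{i+1})}$ is the local operator of Lemma \ref{kurkaslemma}. The crucial structural input---which here replaces the flatness or subharmonicity used in earlier works---is that the centered average reproduces affine functions: if $L(t)=\alpha_i t+\beta$ then $\intav{[x-r,x+r]}L=L(x)$ for every admissible $r$. Writing $f_j=L+w_j$ on $I_i$ with $L$ of slope $\alpha_i$, and using $f_j\ge 0$ to drop the absolute value, I would obtain
\[ M_{(a_i,a_{i+1})}f_j(x)=\sup_{[x-r,x+r]\subset I_i}\intav{[x-r,x+r]}f_j=L(x)+S(x), \]
where $S(x):=\sup_{[x-r,x+r]\subset I_i}\intav{[x-r,x+r]}w_j$ carries \emph{no} absolute value precisely because $f_j=|f_j|$.

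Differentiating, $(M_1f_j)'-f_j'=S'-w_j'$ a.e. on $I_i$, so that
\[ \int_{I_i}\big|(M_1f_j)'-f_j'\big|\le\int_{I_i}|S'|+\int_{I_i}|w_j'|. \]
The second term equals $\int_{I_i}|f_j'-\alpha_i|$, which is harmless after summation. The heart of the argument is to bound $\int_{I_i}|S'|$ by $C\int_{I_i}|w_j'|$, and this is the step I expect to be the main obstacle: $S$ is built from \emph{signed} averages of $w_j$, so Lemma \ref{kurkaslemma}, stated for $\intav{[x-r,x+r]}|\cdot|$, does not apply directly. I would overcome this by adding a constant. On a finite $I_i$ the function $w_j\in W^{1,1}(I_i)$ is bounded, so choosing $m\ge 0$ with $w_j+m\ge 0$ gives $M_{(a_i,a_{i+1})}(w_j+m)=S+m$; hence $S'=\big(M_{(a_i,a_{i+1})}(w_j+m)\big)'$ and Lemma \ref{kurkaslemma} yields $\int_{I_i}|S'|\le C\int_{I_i}|w_j'|$. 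On the two unbounded intervals one has $\alpha_0=\alpha_N=0$, so taking $L\equiv 0$ makes $w_j=f_j\ge 0$ and the same bound holds with $m=0$.

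Combining the two estimates on each $I_i$ and summing over $i=0,\dots,N$ gives
\[ \|(M_1f_j)'-f_j'\|_1\le (C+1)\sum_{i=0}^{N}\int_{I_i}|f_j'-\alpha_i|=(C+1)\,\|f_j'-g_{\varepsilon}\|_1. \]
Finally, for $j$ large enough $\|f_j'-f'\|_1<\varepsilon$, so by the triangle inequality together with $\|f'-g_{\varepsilon}\|_1<\varepsilon$ one gets $\|f_j'-g_{\varepsilon}\|_1<2\varepsilon$, producing the claimed bound $2(C+1)\varepsilon$. The genuinely delicate point, which I would write most carefully, is the passage from the signed maximal function $S$ to the absolute-value operator $M_{(a_i,a_{i+1})}$ via the additive constant, along with the verifications that $M_1f_j=M_{(a_i,a_{i+1})}f_j$ on $I_i$ and that the affine reproduction identity holds; the remainder is bookkeeping.
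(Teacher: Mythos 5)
Your argument is correct and is essentially the paper's own proof: both rest on the affine reproduction property of centered averages, subtract a line of slope $\alpha_i$ on each $(a_i,a_{i+1})$, arrange nonnegativity so that Lemma \ref{kurkaslemma} applies (your additive constant $m$ is exactly the paper's choice of $L_i\le 0$), and then sum the $(C+1)\int_{(a_i,a_{i+1})}|f_j'-\alpha_i|$ bounds and split off $\|f_j'-f'\|_1<\varepsilon$. The only difference is presentational.
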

\begin{proof}
Let $L_i:(a_i,a_{i+1})\to \mathbb{R}$ be a line such that $L_i'=\alpha_i$ and $L_i\le 0$ (since $\alpha_0=\alpha_n=0$, $L_0$ and $L_n$ are constant). We observe that
\begin{align}\label{firstinequalitycentered}
\int_{(a_i,a_{i+1})}|(M_1f_j)'-(f_j)'|\le \int_{(a_i,a_{i+1})}|(M_1f_j)'-L_i'|+|L_i'-(f_j)'|.     
\end{align}
Let us notice that, for every $x\in (a_i,a_{i+1})$, we have \begin{align*}
M_1f_j-L_i&=\left(\sup_{r<d(x,\{a_i,a_{i+1}\})}\intav{[x-r,x+r]}f_j\right)-L_i\\
&=\sup_{r<d(x,\{a_i,a_{i+1}\})}\intav{[x-r,x+r]}(f_j-L_i)=M_{1}(f_j-L_i).
\end{align*}
Therefore, we have
\begin{align*}
\int_{(a_i,a_{i+1})}|(M_1f_j)'-L_i'|&=\int_{(a_i,a_{i+1})}|(M_1f_j-L_i)'|\\
&=\int_{(a_i,a_{i+1})}|(M_1(f_j-L_i))'|\\
&=C\int_{(a_i,a_{i+1})}|(f_j-L_i)'|.
\end{align*}
Combining this with \eqref{firstinequalitycentered} we have that 
\begin{align*}
 \int_{(a_i,a_{i+1})}|(M_1f_j)'-(f_j)'|&\le (C+1)\int_{(a_i,a_{i+1})}|f_j'-\alpha_i|\le (C+1)\left(\int_{(a_i,a_{i+1})}|f'-\alpha_i|+|f'-f_j'|\right).
\end{align*}
Therefore, we have $$\|(M_1f_j)'-f_j'\|_1\le (C+1)\left(\varepsilon+\|f'-f_j'\|_1\right).$$
Since $\|f'-f_j'\|_1<\varepsilon$ for $j$ big enough, we conclude our proposition.
\end{proof}
Analogously, we conclude that $\|(M_1f)'-(f)'\|_1\le 2(C+1)\varepsilon$, and therefore $\|(M_1f_j)'-(M_1f)'\|_1\le (4C+5)\varepsilon,$ for $j$ big enough.

\begin{figure}[ht]
\includegraphics[scale = 0.8]{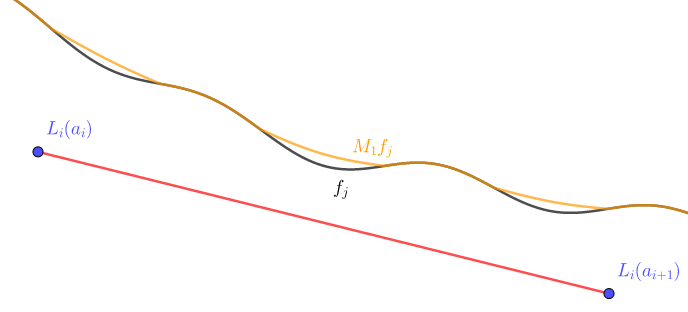}
\caption{$f_j$ and $M_1f_j$ are close at the derivative level to $L_i$ when $j$ is big enough.}
\label{adsf}
\end{figure}
\section{Proof of Theorem \ref{theoremcentered}}
Now we are able to conclude our result.
\begin{proof}
By choosing $K$ big enough and $\delta$ small enough such that Lemmas \ref{controloverfinitenumerofpoints} and \ref{controlnearinfinitycc} hold, we have that
\begin{align}\label{controltrivialpartscc}
\int_{\mathbb{R}\setminus U_{\delta,K}}|(Mf_j)'-(Mf)'|<2\varepsilon,
\end{align}
for $j$ big enough. Now we focus on $U_{\delta,K}$. We follow a similar strategy than in \cite[Lemma 11]{CMP2017}. We observe that $M=\max \{M_1,M_2\}$. Let us write $X_j:=\{x\in U_{\delta,K};M_1f_j(x)>M_2f_j(x)\}$, $Y_j:=\{x\in U_{\delta,K};M_1f_j(x)=M_2f_j(x)\}$ and $Z_j:=\{x\in U_{\delta,K};M_1f_j(x)<M_2f_j(x)\}$. We define $X,Y$ and $Z$ analogously, but this time with respect to $f$ instead of $f_j$. We observe that $(Mf_j)'=(M_1f_j)'$ a.e. in $X_j$, $(Mf_j)'=(M_1f_j)'=(M_2f_j)'$ a.e in $Y_{j}$ and $(M_2f_j)'(x)=(Mf_j)'(x)$ in $Z_j$. Analogous properties hold for $f$ in $X,Y$ and $Z$. Let us observe that 
\begin{align*}
\int_{X}|(Mf_j)'-(Mf)'|&=\int_{X\cap X_j}|(Mf_j)'-(Mf)'|+\int_{X\cap Y_j}|(Mf_j)'-(Mf)'|+\int_{X\cap Z_j}|(Mf_j)'-(Mf)'|\\
&\le \int_{X\cap X_j}|(M_1f_j)'-(M_1f)'|+\int_{X\cap Y_j}|(M_1f_j)'-(M_1f)'|+\int_{X\cap Z_j}|(M_2f_j)'-(M_1f)'|\\
&\le \int_{U_{\delta,K}}|(M_1f_j)'-(M_1f)'|+\int_{X\cap Z_j}|(M_2f_j)'-(M_2f)'|+\int_{X\cap Z_j}|(M_2f)'-(M_1f)'|.
\end{align*}
By Lemma \ref{uniformconvergencecentered} we have that $\chi_{X\cap Z_j}\to 0$ a.e., therefore by the dominated convergence theorem we have $\int_{X\cap Z_j}|(M_2f)'-(M_1f)'|<\varepsilon$ for $j$ big enough. Then, by combining Propositions  \ref{integralconvergencem2} and \ref{controlofm1} with this we have that there exists and universal constant $\tilde{C}$ such that $$\int_{X}|(Mf_j)'-(Mf)'|<\tilde{C}\varepsilon,$$
for $j$ big enough. Similarly, we conclude an analogous statement about $Y$ and $Z$. Therefore, considering \eqref{controltrivialpartscc}, we have that there exist an universal constant $\tilde{\tilde{C}}$ such that 
$$\|(Mf)'-(Mf_j)'\|_1<\tilde{\tilde{C}}\varepsilon,$$ for $j$ big enough. From this we conclude our result.
\end{proof}
\section{Acknowledgements}
The author was supported by CAPES-Brazil and by STEP programme of ICTP-Italy. The author is thankful to Emanuel Carneiro for helpful discussions during the preparation of this manuscript. 

\bibliography{Reference}
\bibliographystyle{amsplain} 
\end{document}